\setlist{  
  parsep=0pt,
}
\definecolor{wb}{RGB}{51,153,255}
\numberwithin{equation}{subsection}
\newcommand{\defeq}{\vcentcolon=}
\def\moverlay{\mathpalette\mov@rlay}
\def\mov@rlay#1#2{\leavevmode\vtop{%
   \baselineskip\z@skip \lineskiplimit-\maxdimen
   \ialign{\hfil$\m@th#1##$\hfil\cr#2\crcr}}}
\newcommand{\charfusion}[3][\mathord]{
    #1{\ifx#1\mathop\vphantom{#2}\fi
        \mathpalette\mov@rlay{#2\cr#3}
      }
    \ifx#1\mathop\expandafter\displaylimits\fi}
\newcommand{\bigcupdot}{\charfusion[\mathop]{\bigcup}{\cdot}}
\newtheoremstyle{definitions}
 	{\topsep}
	{\topsep}
	{}
	{}
	{\bfseries}
	{:}
	{.5em}
	{}
\newtheoremstyle{lemmata}
	{\topsep}
	{\topsep}
	{\itshape} 
	{}
	{\bfseries}
	{:}
	{.5em}
	{}
\theoremstyle{lemmata}
\newtheorem{Theorem}[subsection]{Theorem}
\newtheorem{Corollary}[subsection]{Corollary}
\newtheorem{Proposition}[subsection]{Proposition}
\theoremstyle{definitions}
\newtheorem{Definition}[subsection]{Definition}
\newtheorem{Remark}[subsubsection]{Remark}
\newtheorem{Remarks}[subsection]{Remarks}
\newtheorem*{Notation}{Notation}
\newtheorem{Example}[subsection]{Example}
\DeclareMathOperator{\tor}{tor}
\DeclareMathOperator{\spec}{spec}
\DeclareMathOperator{\rk}{rk}
\DeclareMathOperator{\GL}{GL}
\DeclareMathOperator{\Gal}{Gal}
\title{On the field generated by the periods of a Drinfeld module}
\author{Ernst-Ulrich Gekeler}
\date{\today}
\begin{document}

\begin{abstract}
	Generalizing the results of Maurischat in \cite{Maurischatxx}, we show that the field $K_{\infty}(\Lambda)$ of periods of a Drinfeld module $\phi$ of rank $r$ defined over $K_{\infty} = \mathds{F}_{q}((T^{-1}))$ may be arbitrarily large over $K_{\infty}$.
	We also show that, in contrast, the residue class degree $f( K_{\infty}(\Lambda) | K_{\infty})$ remains bounded by a constant that depends only on $r$.
\end{abstract}

\maketitle

\setcounter{section}{-1}

\section{Introduction}

The present paper is inspired by the recent note \cite{Maurischatxx} of Andreas Maurischat. Let $\phi$ be a Drinfeld $A$-module of rank $r \in \mathds{N}$ over $K_{\infty}$, where $A = \mathds{F}_{q}[T]$ and $K_{\infty} = \mathds{F}_{q}((T^{-1}))$. The field
$K_{\infty}(\Lambda)$ generated by the period lattice $\Lambda$ of $\phi$ agrees with its torsion field $K_{\infty}(\tor(\phi))$, and even with $K_{\infty}( \bigcup_{n \in \mathds{N}} \phi[T^{n}])$, where $\phi[T^{n}]$ is the module of $T^{n}$-torsion
of $\phi$.

In the case where $r = 2$ (the case $r=1$ is almost trivial), Maurischat shows through investigation of the Newton polygon of the $T$-division polynomial $\phi_{T}(X)$ of $\phi$ and its iterates that the ramification index 
$e( K_{\infty}(\Lambda) | K_{\infty})$ may become arbitrarily large. We generalize this fact to arbitrary ranks $r \geq 2$ (and to finite extensions $L$ of $K_{\infty})$, see Theorem \ref{Theorem.Ramification-over-L-unbounded}. We show moreover that the residue class degree
$f( L(\Lambda)|L)$ is bounded through a constant depending only on $r$ (Theorem \ref{Theorem.Residue-class-degree-bound}), thereby answering a question raised by several mathematicians, among which Bjorn Poonen, Chantal David, and Mihran Papikian.

Instead of the Newton polygon, we use the spectral filtration $\Lambda$ as our basic tool. Let $\mathbf{B} = \{ \lambda_{1}, \lambda_{2}, \dots, \lambda_{r}\}$ be a successive minimum basis of $\Lambda$ (see \cite{Gekeler2017} Section 3), grouped 
$\mathbf{B} = \mathbf{B}_{1} \cup \mathbf{B}_{2} \cup \dots \cup \mathbf{B}_{t}$ into subsets $\mathbf{B}_{t}$ of basis vectors of equal length, and let $\Lambda_{\tau}$ be the sublattice of $\Lambda$ generated by 
$\mathbf{B}_{1} \cup \dots \cup \mathbf{B}_{\tau}$, where $1 \leq \tau \leq t$. Put $L_{\tau} \defeq L(\Lambda_{\tau})$, so $L_{0} = L$, and $L_{t} = L(\Lambda)$. By means of an analysis of the intermediate extensions $L_{\tau}|L_{\tau-1}$ 
carried out in Section \ref{Section.Structure-of-the-field-extension}, we get control on the behavior of the ramification index and residue class degree along the tower $(L_{\tau})_{0 \leq \tau \leq t}$. Together with the determination of the spectrum 
$\spec_{A}(\Lambda) = (\lvert \lambda_{1} \rvert, \lvert \lambda_{2} \rvert, \dots, \lvert \lambda_{r} \rvert)$ of $\Lambda$ out of the Newton polygon $\mathrm{NP}(\phi_{\tau}(X))$ in a special case, performed in Section \ref{Section.An-example}, 
this yields our results Theorem \ref{Theorem.Ramification-over-L-unbounded} and Theorem \ref{Theorem.Residue-class-degree-bound}.

\begin{Notation} 
	\begin{itemize}
		\item $\mathds{F} = \mathds{F}_{q} = \text{finite field with $q$ elements, of characteristic $p$}$,
		\item $A = \mathds{F}[T]$, $K=\mathds{F}(T)$,
		\item $K_{\infty} = \mathds{F}((T^{-1})) = $ completion of $K$ w.r.t. the valuation $v=v_{\infty}$ of $K$ at infinity, with absolute value $\lvert \cdot \rvert$ normalized by $\lvert T \rvert = q$;
		\item $C = \widehat{\overline{K}}_{\infty} = $ completed algebraic closure of $K_{\infty}$, provided with the unique extension of $\lvert \cdot \rvert$ to $C$.
	\end{itemize}
	If $K_{\infty} \subset L \subset L'$ are finite extensions, $e(L'|L)$ and $f(L'|L)$ denote the ramification index and the residue class degree, respectively. Throughout, we fix one such $L$.
\end{Notation}

\section{Preliminaries}

\subsection{} The letter $\phi$ always denotes a Drinfeld $A$-module of rank $r \geq 2$ defined over $L$. It is described through its \textbf{$\boldsymbol{T}$-operator polynomial}.

\begin{equation}
	\phi_{T}(X) = TX + g_{1}X^{q} + \dots + g_{r}X^{q^{r}}, ~g_{1}, \dots, g_{r} \in L, ~g_{r} \neq 0
\end{equation}
or its \textbf{period lattice $\Lambda$}, a discrete free $A$-submodule of $C$ of rank $r$. These are related by 
\begin{align}
	e_{\Lambda}(Tz) &= \phi_{T}(e_{\Lambda}(z)) \qquad (z \in C),
\intertext{where}
	e_{\Lambda}(z) 	&= z {\prod_{\lambda \in \Lambda}}^{\prime} (1-z/\lambda) = z \prod_{0 \neq \lambda \in \Lambda} (1 - z/\lambda)
\end{align}
is the \textbf{exponential function} of $\Lambda$. See e.g. \cite{Goss96}, Chapter 4 for the elementary theory of Drinfeld modules and their uniformizations. Given $0 \neq a \in A$, we write $\phi[a]$ for the $A$-module of $a$-division points
of $\phi$, which is isomorphic with $(A/(a))^{r}$. It is easy to see that in fact $e_{\Lambda}$ as a power series in $z$ has coefficients in $L$, thus: \stepcounter{subsubsection} \stepcounter{subsubsection}
\begin{equation}
	\Lambda \text{ is contained in the separable closure $L^{\mathrm{sep}}$ of $L$}. 
\end{equation}
Hence $L(\Lambda)$ is a finite Galois extension of $L$. Our starting point is

\begin{Proposition} \label{Proposition.Description-of-L(Lambda)-as-L(tor(phi))}
	Let $a$ be any non-constant element of $A$. Then \stepcounter{equation}
	\[
		L(\Lambda) = \bigcup_{n \in \mathds{N}} L(\phi[a^{n}]) = L(\tor(\phi)).
	\]
	Here $\tor(\phi) = \bigcup_{0 \neq a \in A} \phi[a] \cong (K/A)^{r}$ is the torsion submodule of $\phi$, and $L(*)$ is the field extension of $L$ generated by $(*)$.
\end{Proposition}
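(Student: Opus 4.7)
My plan is to close the cycle of inclusions
\[
L(\Lambda) \subseteq \bigcup_{n \in \mathds{N}} L(\phi[a^{n}]) \subseteq L(\tor(\phi)) \subseteq L(\Lambda),
\]
of which the middle one is immediate. The two tools I need are (a) the Galois-equivariance of the exponential, $\sigma(e_{\Lambda}(z)) = e_{\Lambda}(\sigma(z))$ for $\sigma \in \Gal(L^{\mathrm{sep}}|L)$ and $z \in L^{\mathrm{sep}}$, a consequence of (1.1.3) (the coefficients of $e_{\Lambda}$ lie in $L$), and (b) the functional equation $\phi_{c}(e_{\Lambda}(z)) = e_{\Lambda}(cz)$ for $c \in A$, which identifies $\phi[a^{n}]$ with $e_{\Lambda}(a^{-n}\Lambda)$.

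For $L(\tor(\phi)) \subseteq L(\Lambda)$ I pick $0 \neq b \in A$ and $\mu \in \phi[b]$, write $\mu = e_{\Lambda}(\lambda/b)$ for some $\lambda \in \Lambda$, and observe that any $\tau \in \Gal(L^{\mathrm{sep}}|L(\Lambda))$ fixes $\lambda/b$, hence fixes $\mu$ by (a). Thus every torsion point lies in $L(\Lambda)$.

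The harder direction $L(\Lambda) \subseteq M \defeq \bigcup_{n} L(\phi[a^{n}])$ goes by a Galois-descent argument. First, $M|L$ is Galois because each $L(\phi[a^{n}])|L$ is the splitting field of the separable polynomial $\phi_{a^{n}}(X)$ (its formal derivative is the non-zero constant $a^{n}$). Given $\lambda \in \Lambda$ and $\sigma \in \Gal(L^{\mathrm{sep}}|M)$, the torsion points $\mu_{n} \defeq e_{\Lambda}(\lambda/a^{n}) \in \phi[a^{n}] \subset M$ are $\sigma$-fixed, so (a) yields $e_{\Lambda}(\sigma(\lambda)/a^{n}) = e_{\Lambda}(\lambda/a^{n})$, i.e.\ $\sigma(\lambda) - \lambda \in a^{n}\Lambda$ for every $n$. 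Discreteness of $\Lambda$ forces $\bigcap_{n} a^{n}\Lambda = \{0\}$, since any non-zero element of $a^{n}\Lambda$ has absolute value at least $\lvert a\rvert^{n} \cdot \min_{0 \neq \nu \in \Lambda} \lvert \nu \rvert$, which tends to $\infty$. Hence $\sigma(\lambda) = \lambda$, and the Galois correspondence places $\lambda$ in $M$.

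The one point that needs a moment of care is the verification that the (possibly infinite) extension $M|L$ really is Galois, so that $M = (L^{\mathrm{sep}})^{\Gal(L^{\mathrm{sep}}|M)}$ is available; once this is noted, the remainder is a routine combination of the functional equation, the Galois-equivariance of $e_{\Lambda}$, and the discreteness of the period lattice.
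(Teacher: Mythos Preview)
Your argument is correct. The paper itself does not give a self-contained proof: it simply cites Maurischat's Proposition~2.1 for the case $L=K_\infty$, $a=T$, and declares the generalization to arbitrary $L$ and $a$ (and the second equality) obvious. Your write-up supplies precisely those details. The key ingredients you isolate---Galois-equivariance of $e_\Lambda$ (from its $L$-rational coefficients), the identification $\phi[a^n]=e_\Lambda(a^{-n}\Lambda)$, and the discreteness statement $\bigcap_n a^n\Lambda=\{0\}$---are exactly what underlies the cited result, so there is no genuine divergence in method. One small remark: your concern that $M|L$ be Galois is harmless but not strictly necessary, since for any intermediate field $L\subset M\subset L^{\mathrm{sep}}$ one already has $M=(L^{\mathrm{sep}})^{\Gal(L^{\mathrm{sep}}|M)}$.
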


\begin{proof}
	The first equality for $L = K_{\infty}$ and $a = T$ is shown in \cite{Maurischatxx} Proposition 2.1; the generalization to arbitrary $L$ and $a$ is obvious, as then is the second equality.
\end{proof}

In the sequel, we study the extension $L(\Lambda)|L$ that occurs in \ref{Proposition.Description-of-L(Lambda)-as-L(tor(phi))}, and its Galois group
\begin{equation}
	G = \Gal(L(\Lambda)|L).
\end{equation}

\subsection{} Let $\mathbf{B} = \{ \lambda_{1}, \lambda_{2}, \dots, \lambda_{r}\}$ be a \textbf{successive minimum basis} (SMB) of the lattice $\Lambda$. It is characterized by:
\subsubsection{} \label{Subsubsection.Characterisation-of-successive-minimum-basis} For each $i \in \{1,2,\dots,r\}$, $\lambda_{i} \in \Lambda$ has minimal absolute value $\lvert \lambda_{i} \rvert$ 
among the lattice vectors $\lambda \in \Lambda \smallsetminus \sum_{1 \leq j < i} A \lambda_{j}$. (The condition for $i=1$ means that $\lambda_{1}$ has minimal absolute value $> 0$.)

Choosing $\mathbf{B}$ according to \ref{Subsubsection.Characterisation-of-successive-minimum-basis}, it is in fact a basis of $\Lambda$, which is \textbf{orthogonal}, that is \stepcounter{subsubsection} \stepcounter{equation}
\begin{equation} \label{Equation.Orthogonality}
	\Big\lvert \sum_{1 \leq i \leq r} a_{i} \lambda_{i} \Big\rvert = \max_{i} \lvert a_{i} \rvert \lvert \lambda_{i} \rvert
\end{equation}
for each family $a_{1}, \dots, a_{r}$ of coefficients in $K_{\infty}$, and:
\subsubsection{} \label{Subsubsection.Series-of-absolute-values-of-lattice-vectors-independent-of-choices} The series $\lvert \lambda_{1} \rvert \leq \lvert \lambda_{2} \rvert \leq \dots \leq \lvert \lambda_{r} \rvert$ of values of the $\lambda_{i}$ is an invariant of $\Lambda$, that is, independent of choices made. (See \cite{Gekeler2017} Section 3 for a
proof of the assertions.) 

We call the $r$-tuple $(\lvert \lambda_{1} \rvert, \lvert \lambda_{2} \rvert, \dots, \lvert \lambda_{r} \rvert)$ the \textbf{spectrum $\spec_{A}(\Lambda)$} of the $A$-lattice $\Lambda$. For later use, we cite the following result,
which is discussed in \cite{Gekeler2018}, Observation 4.3. (Note the different numbering there, in which $\{ \lambda_{r}, \lambda_{r-1}, \dots, \lambda_{1}\}$ forms an SMB of $\Lambda$!)

\begin{Proposition} ~ \label{Proposition.Spectrum-and-newton-polygon}
	\begin{enumerate}[label=$\mathrm{(\roman*)}$]
		\item The spectrum $\spec_{A}(\Lambda)$ and the Newton polygon $\mathrm{NP}(\phi_{T}(X))$ of $\phi_{T}(X)$ determine each other.
		\item The point $(q^{i}, v(g_{i}))$, where $1 \leq i < r$, is a break point of $\mathrm{NP}(\phi_{T}(X))$ if and only if $\lvert \lambda_{i} \rvert < \lvert \lambda_{i+1} \rvert$. 
	\end{enumerate}
\end{Proposition}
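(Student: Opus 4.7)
The plan is to compute the absolute values of the nonzero roots of $\phi_{T}(X)$ in terms of the successive minimum basis $\mathbf{B} = \{\lambda_{1}, \dots, \lambda_{r}\}$, and then read off the Newton polygon through the standard slope-versus-valuation dictionary. By the exponential uniformization, the nonzero roots of $\phi_{T}(X)$ are exactly the values $\mu = e_{\Lambda}(\nu)$ for $\nu$ running over $T^{-1}\Lambda/\Lambda \smallsetminus \{0\}$, uniquely represented as $\nu = \sum_{j=1}^{r} (c_{j}/T)\lambda_{j}$ with $c_{j} \in \mathds{F}$. Setting $i = i(\nu)$ to be the largest index with $c_{i} \neq 0$, the orthogonality relation \eqref{Equation.Orthogonality} immediately yields $\lvert \nu \rvert = \lvert \lambda_{i} \rvert /q$.

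The key claim is then that $\lvert e_{\Lambda}(\nu) \rvert = \lvert \nu \rvert = \lvert \lambda_{i} \rvert /q$. When $\lvert \nu \rvert < \lvert \lambda_{1} \rvert$, this is immediate from the product formula $e_{\Lambda}(z) = z \prod'_{\lambda}(1 - z/\lambda)$, since every factor has absolute value $1$. In general, $\lvert \nu \rvert = \lvert \lambda_{i} \rvert /q$ may exceed some $\lvert \lambda_{j} \rvert$ with $j < i$, and the factors with $\lvert \lambda \rvert \leq \lvert \nu \rvert$ must be controlled. I would address this by enumerating such $\lambda$ via the orthogonal decomposition in the SMB, organizing them by the sublattices $\Lambda_{\tau}$ of the spectral filtration introduced in the introduction, and showing that the contributions from factors with $\lvert \lambda \rvert < \lvert \nu \rvert$ and from those with $\lvert \lambda \rvert = \lvert \nu \rvert$ cancel so as to reproduce the leading $\lvert \nu \rvert$. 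This product-formula bookkeeping is the main technical obstacle, since neither the ``small-$\nu$'' direct estimate nor a reduction to a minimum-length representative works verbatim once $\lvert \nu \rvert \geq \lvert \lambda_{1} \rvert$; the orthogonality of the SMB is what makes the balancing possible.

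Once the claim is established, counting the $\nu$ with $i(\nu) = i$ gives exactly $(q-1)q^{i-1} = q^{i} - q^{i-1}$ nonzero roots $\mu$ of valuation $v(\lambda_{i}) + 1$. Newton's theorem then forces $\mathrm{NP}(\phi_{T}(X))$ to consist of segments of slope $-v(\lambda_{i}) - 1$ and horizontal width $q^{i} - q^{i-1}$ for $i = 1, \dots, r$, emanating from the starting point $(1, v(T)) = (1, -1)$. These slopes are non-decreasing (since $\lvert \lambda_{i} \rvert$ is), and the break points of the polygon occur exactly where the slope strictly increases, namely at $(q^{i}, v(g_{i}))$ iff $\lvert \lambda_{i} \rvert < \lvert \lambda_{i+1} \rvert$; this is (ii). Part (i) then follows at once, since the slopes together with the fixed starting point $(1,-1)$ determine the polygon, and the slopes in turn determine the spectrum $\spec_{A}(\Lambda)$.
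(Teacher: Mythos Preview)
Your overall strategy --- compute the absolute values of the nonzero $T$-torsion points $\mu = e_{\Lambda}(\nu)$, $\nu \in T^{-1}\Lambda \smallsetminus \Lambda$, via the SMB and orthogonality, then read off the Newton polygon --- is the right one and is essentially what the cited reference \cite{Gekeler2018} does (the paper itself gives no proof but quotes Observation~4.3 there). However, your ``key claim'' that $\lvert e_{\Lambda}(\nu)\rvert = \lvert \nu\rvert$ is \emph{false} in general, and the proposed cancellation of the small-$\lambda$ factors does not occur.

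Concretely, write $\nu = \sum_{j} (c_{j}/T)\lambda_{j}$ with $i = i(\nu)$ maximal such that $c_{i}\neq 0$, so $\lvert\nu\rvert = \lvert\lambda_{i}\rvert/q$ by \eqref{Equation.Orthogonality}. For any nonzero $\lambda\in\Lambda$, orthogonality of the SMB (applied with $K_{\infty}$-coefficients) gives $\lvert \nu-\lambda\rvert = \max(\lvert\nu\rvert,\lvert\lambda\rvert)$, whence
\[
\lvert e_{\Lambda}(\nu)\rvert \;=\; \lvert\nu\rvert \prod_{\substack{0\neq \lambda\in\Lambda\\ \lvert\lambda\rvert<\lvert\nu\rvert}} \frac{\lvert\nu\rvert}{\lvert\lambda\rvert}.
\]
This product is $>1$ as soon as some $\lvert\lambda_{j}\rvert < \lvert\lambda_{i}\rvert/q$; for instance, in rank $2$ with $\lvert\lambda_{2}/\lambda_{1}\rvert$ large one gets $\lvert e_{\Lambda}(\lambda_{2}/T)\rvert \gg \lvert\lambda_{2}\rvert/q$ (compare the formula \eqref{Equation.Evaluation-Exponential-function} in the paper, which is exactly this computation for $e_{\tau-1}$). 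So nothing cancels; all small-$\lambda$ factors contribute strictly.

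What does survive is that the right-hand side above is a \emph{strictly increasing} function of $\lvert\nu\rvert$, depending only on $\lvert\nu\rvert$. Hence the $q^{i}-q^{i-1}$ roots with $i(\nu)=i$ share a common size, and these sizes are equal for $i$ and $i+1$ precisely when $\lvert\lambda_{i}\rvert = \lvert\lambda_{i+1}\rvert$; this yields (ii). For (i), the Newton polygon is determined by these root sizes; conversely one recovers $\spec_{A}(\Lambda)$ inductively: $\lvert\lambda_{1}\rvert$ from the smallest root size (where the product is empty, so indeed $\lvert\mu_{1}\rvert=\lvert\lambda_{1}\rvert/q$), then $\lvert\lambda_{2}\rvert$ from the next size using the now-known contribution of $A\lambda_{1}$, and so on. Replacing your incorrect identity by the displayed product formula and this monotonicity argument repairs the proof.
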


The break points of $\mathrm{NP}(\phi_{T}(X))$, i.e., the sizes of the roots of $\phi_{T}(X)$, may be found from $\spec_{A}(\Lambda)$ by \cite{Gekeler2017} Lemma 3.4. However, the inverse map which describes 
$\spec_{A}(\Lambda)$ in terms of $\mathrm{NP}(\phi_{T}(X))$, is less explicit. See Section \ref{Section.An-example} for a comparatively simple special case.

\subsection{} \label{subsection.Collection-and-ordering-of-basis-vectors} We collect basis vectors of equal lengths in packets $\mathbf{B}_{\tau}$ of sizes $r_{\tau}$, and ordered according to size:
\[
	\underbrace{\lvert \lambda_{1} \rvert = \dots = \lvert \lambda_{r_{1}} \rvert}_{\mathbf{B}_{1}} < \underbrace{\lvert \lambda_{r_{1} + 1} \rvert = \dots = \lvert \lambda_{r_{1}+r_{2}} \rvert}_{\mathbf{B}_{2}} < \cdots \, \underbrace{= \lvert \lambda_{r} \rvert}_{\mathbf{B}_{t}}.
\]
That is $\mathbf{B} = \bigcupdot_{1 \leq \tau \leq t} \mathbf{B}_{\tau}$, and $r_{\tau}$ is the multiplicity by which the $\tau$-th of the values $\lvert \lambda_{i} \rvert$ counted without multiplicity occurs. We further let 
\begin{equation} \label{Equation.Direct-summand-of-Lambda}
	\overline{\mathbf{B}}_{\tau} = \mathbf{B}_{1} \cup \mathbf{B}_{2} \cup \dots \cup \mathbf{B}_{\tau} \quad \text{and} \quad \Lambda_{\tau} = \sum_{\lambda \in \overline{\mathbf{B}}_{\tau}} A\lambda \qquad (0 \leq \tau \leq t),
\end{equation}
a direct summand of $\Lambda$ with $\rk_{A}( \Lambda_{\tau}| \Lambda_{\tau-1}) = r_{\tau} (1 \leq \tau \leq t)$. The sequence of sublattices $\{0 \} = \Lambda_{0} \subset \Lambda_{1} \subset \dots \subset \Lambda_{t} = \Lambda$
is intrinsically defined and is called the \textbf{spectral filtration} on $\Lambda$. As the Galois group $G = \Gal(L(\Lambda) | L)$ acts $A$-linearly and length-preserving on $\Lambda$, it follows from \eqref{Equation.Orthogonality} that each $\sigma \in G$
satisfies
\begin{equation} \label{Equation.Galois-permutation-evaluated-on-basis-vector}
	\sigma(\lambda_{j}) = \sum_{1 \leq i \leq r} a_{i,j} \lambda_{i} \qquad (1 \leq j \leq r)
\end{equation}
with $a_{i,j} \in A$, where
\begin{equation} \label{Equation.Bound-for-matrix-entries}
	\lvert a_{i,j} \rvert \leq \lvert \lambda_{i} / \lambda_{j} \rvert.
\end{equation}
That is, $\sigma$ is represented by a $r\times r$-matrix of shape 
\begin{equation} \label{Equation.Block-matrix.structure}
	\begin{tikzpicture}[scale=0.5, baseline=(current  bounding  box.center)]
		\node at (2,2) (0) {\huge $0$};
		\node at (-3.5,3.5) (B1) {$B_{1}$};
		\node at (-2,2) (B2) {$B_{2}$};
		\node at (-2,-2) (aij) {$a_{i,j}$};
		\node at (3,-3) (Bt) {$B_{t}$};
		\node at (-5.5,-2) (i) {$i$};
		\node at (-2,-5.5) (j) {$j$}; 
		
		\draw (-5,-5) rectangle (5,5);
		\draw (-2.5,5) -- (-2.5,1.5) -- (-1.5,1.5) -- (-1.5,2.5) -- (-5,2.5) -- (-5,5) -- (-2.5,5) -- cycle;
		\draw (5,-5) -- (1,-5) -- (1,-1) -- (5,-1) -- (5,-5) -- cycle;
	
		\draw[fill=black, circle] (-1,1) circle (.75pt);
		\draw[fill=black, circle] (-0.25,0.25) circle (.75pt);
		\draw[fill=black, circle] (0.5,-0.5) circle (.75pt);	
		
		\draw[dotted] (-5,-2) -- (aij);
		\draw[dotted] (-2,-5) -- (aij);
	\end{tikzpicture}
\end{equation}
with matrices $B_{\tau} \in \GL(r_{\tau}, \mathds{F})$ along the diagonal, zeroes above the blocks $B_{\tau}$, and entries $a_{i,j} \in A$ bounded by \eqref{Equation.Bound-for-matrix-entries} below the blocks. We conclude:

\begin{Proposition}
	$G = \Gal(L(\Lambda)|L)$ is a finite group whose order is bounded by a constant that depends only on the spectrum $\spec_{A}(\Lambda)$ of $\Lambda$.
\end{Proposition}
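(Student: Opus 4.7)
The plan is a direct counting argument built on the block structure just derived. The restriction map $\sigma \mapsto \sigma|_{\Lambda}$ embeds $G$ into the group of length-preserving $A$-module automorphisms of $\Lambda$, since $L(\Lambda)$ is by definition generated over $L$ by $\Lambda$, and every $\sigma \in G$ preserves the (unique) extension of $\lvert \cdot \rvert$ from $L$ to $L(\Lambda) \subset C$. Thus $\lvert G \rvert$ is bounded by the number of matrices $(a_{i,j})_{1 \leq i,j \leq r}$ over $A$ of the shape described in \eqref{Equation.Block-matrix.structure}.

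I would count these matrices blockwise. A diagonal block $B_{\tau}$ has entries with $\lvert \lambda_{i} \rvert = \lvert \lambda_{j} \rvert$, so \eqref{Equation.Bound-for-matrix-entries} forces $a_{i,j} \in \mathds{F}_{q}$. The full matrix being block-triangular, its determinant equals $\prod_{\tau} \det(B_{\tau})$, and since it must lie in $A^{\times} = \mathds{F}_{q}^{\times}$, each $B_{\tau}$ must belong to $\GL(r_{\tau}, \mathds{F}_{q})$, contributing at most $\prod_{\tau=1}^{t} \lvert \GL(r_{\tau}, \mathds{F}_{q}) \rvert$ possibilities. Each entry $a_{i,j}$ strictly below the diagonal blocks lies in the finite $\mathds{F}_{q}$-subspace $\{a \in A : \lvert a \rvert \leq \lvert \lambda_{i}/\lambda_{j} \rvert\}$ of $A$, whose cardinality is determined by the ratio $\lvert \lambda_{i}/\lambda_{j} \rvert$.

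Multiplying these counts yields an explicit upper bound on $\lvert G \rvert$ depending only on the multiplicities $r_{\tau}$ and the ratios $\lvert \lambda_{i}/\lambda_{j} \rvert$---that is, only on the spectrum $\spec_{A}(\Lambda)$. No real obstacle arises; the substantive content is the block-triangular shape of \eqref{Equation.Block-matrix.structure}, and the present proposition is essentially a bookkeeping corollary. The only slightly subtle step is deducing invertibility of each $B_{\tau}$ over $\mathds{F}_{q}$ from invertibility of the full $A$-linear map on $\Lambda$, which is immediate from the block-triangular form.
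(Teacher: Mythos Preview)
Your proposal is correct and matches the paper's approach exactly: the paper simply states the proposition as an immediate consequence (``We conclude:'') of the block-triangular matrix shape \eqref{Equation.Block-matrix.structure} with diagonal blocks in $\GL(r_{\tau}, \mathds{F})$ and sub-diagonal entries bounded by \eqref{Equation.Bound-for-matrix-entries}, without writing out a separate proof. Your explicit counting argument just fills in the routine bookkeeping that the paper leaves to the reader.
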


Henceforth, we identify $G$ with the subgroup of $\GL(r,A)$ given by \eqref{Equation.Galois-permutation-evaluated-on-basis-vector}.

\begin{Example}[see {\cite{Maurischatxx}}, Theorem 3.1]
	Suppose that $r=2$, $\phi_{T}(X) = TX + gX^{q} + \Delta X^{q^{2}}$. There are two possibilities for the Newton polygon:
	\begin{enumerate}[label = (\alph*)]
		\item $\mathrm{NP}(\phi_{T}(X))$ is a straight line, that is, the point $(q, v(g))$ lies above or on the line determined by $(1,-1)$ and $(q^{2}, v(\Delta))$. Then, according to (\ref{Proposition.Spectrum-and-newton-polygon}), 
		$\spec_{A}(\Lambda) = (\lvert \lambda_{1} \rvert, \lvert \lambda_{1} \rvert)$ and $G \hookrightarrow \GL(2,\mathds{F})$.
		\item $(q, v(g))$ lies below the line through $(1,-1)$ and $(q^{2}, v(\Delta))$, and is a break point of $\mathrm{NP}(\phi_{T}(X))$. Then $\spec_{A}(\Lambda) = ( \lvert \lambda_{1} \rvert, \lvert \lambda_{2} \rvert)$ with
		$\lvert \lambda_{1} \rvert < \lvert \lambda_{2} \rvert$, and $G$ is a subgroup of 
		\[
			\left. \left\{ \begin{pmatrix} a & 0 \\ c & d \end{pmatrix} \, \right| \, a,d \in \mathds{F}^{*}, c \in A \text{ such that $\lvert c \rvert \leq \lvert \lambda_{2} / \lambda_{1} \rvert$} \right\}.
		\]
	\end{enumerate}
	In case (b) we still have to determine $\lvert \lambda_{1} \rvert$ and $\lvert \lambda_{2} \rvert$ from $\mathrm{NP}(\phi_{T}(X))$ (see Section \ref{Section.An-example}), and, in both cases, the actual size of $G$ inside the corresponding matrix group.
\end{Example}

\section{The structure of the field extension $L(\Lambda)|L$} \label{Section.Structure-of-the-field-extension}

We describe the tower of subfields of $L(\Lambda)|L$ that corresponds to the block structure \eqref{Equation.Block-matrix.structure} of $G$.

\subsection{} Having fixed some number $\tau$ with $1 \leq \tau \leq t$, we let $e_{\tau} = e_{\Lambda_{\tau}}$ be the exponential function associated with $\Lambda_{\tau}$ and $\phi^{(\tau)}$ the corresponding Drinfeld module, of rank \stepcounter{subsubsection}
\begin{equation} 
	\overline{r}_{\tau} = r_{1} + r_{2} + \dots + r_{\tau}.
\end{equation}
We also set $r_{0} = \overline{r}_{0} = 0$, $\Lambda_{0} = \{ 0\}$, $e_{0}(z) = z$, and $\phi^{(0)}$ is the trivial Drinfeld module of rank $0$, $\phi^{(0)}_{T}(X) = TX$. Further, \stepcounter{subsubsection}
\begin{equation}
	L_{\tau} \defeq L(\Lambda_{\tau})
\end{equation}
the field generated over $L$ by $\Lambda_{\tau}$, which by Galois theory equals the fixed field of the normal subgroup \stepcounter{subsubsection}
\begin{equation}
	G_{\tau}' \defeq \{ \sigma \in G \mid a_{i,j} = \delta_{i,j} \text{ if } 1 \leq i,j \leq \overline{r}_{\tau} \}
\end{equation}
of $G$. Here we have used the representation \eqref{Equation.Galois-permutation-evaluated-on-basis-vector} of $\sigma \in G$, and $\delta_{i,j}$ is the Kronecker delta. Then \stepcounter{subsubsection}
\begin{equation}
	G_{\tau} \defeq G/G_{\tau}' = \Gal(L_{\tau}|L)
\end{equation}
may be seen as a group of $\overline{r}_{\tau} \times \overline{r}_{\tau}$-matrices with a block structure similar to \eqref{Equation.Block-matrix.structure}. The canonical map
\[
	G_{\tau} = \Gal(L_{\tau}|L) \longrightarrow G_{\tau-1} = \Gal(L_{\tau-1} | L)
\]
corresponds to 
\[
	\begin{array}{|c|c|} \hline P & 0 \\ \hline Q & R \\ \hline \end{array} \longmapsto P,
\]
where $P$, $Q$, $R$ is an $\overline{r}_{\tau-1} \times \overline{r}_{\tau-1}$-matrix, $r_{\tau} \times \overline{r}_{\tau-1}$-matrix, $r_{\tau} \times r_{\tau}$-matrix, respectively. Let $U_{\tau} \subset G_{\tau}$ be the subgroup of matrices
with trivial $P$- and $R$-parts, i.e.,
\begin{equation}
	U_{\tau} = \{ \sigma \in G_{\tau} \mid a_{i,j} = \delta_{i,j} \text{ for } i \leq \overline{r}_{\tau-1} \text{ or } j > \overline{r}_{\tau-1} \}.
\end{equation}
In view of the given block structure, it is abelian and in fact an elementary abelian $p$-group. The kernel $\Gal(L_{\tau}| L_{\tau-1})$ contains $U_{\tau}$ as a normal subgroup. Define
\begin{equation}
	\tilde{L}_{\tau} \defeq L_{\tau}^{U_{\tau}} = \text{fixed field of $U_{\tau}$},
\end{equation}
and let
\begin{equation}
	H_{\tau} \text{ be the group $\Gal(L_{\tau}|L_{\tau-1})/U_{\tau} = \Gal(\tilde{L}_{\tau}|L_{\tau-1})$}. 
\end{equation}	
It equals the image of $G_{\tau}$ under the projection
\[
	\begin{array}{|c|c|} \hline P & 0 \\ \hline Q & R \\ \hline \end{array}   \longmapsto R,
\]
and is a subgroup of $\GL(r_{\tau}, A)$. Finally, define the subgroup
\begin{equation}
	\tilde{\Lambda}_{\tau} \defeq e_{\tau-1}(\Lambda_{\tau})
\end{equation}
of $L_{\tau}$.

\begin{Proposition} ~
	\begin{enumerate}[label=$\mathrm{(\roman*)}$] 
		\item $\tilde{\Lambda}_{\tau}$ is an $A$-submodule of $(L_{\tau}, \phi^{(\tau-1)})$, the additive group $L_{\tau}$ provided with the $A$-module structure via $\phi^{(\tau-1)}$.
		\item As such, $\tilde{\Lambda}_{\tau}$ is free of rank $r_{\tau}$.
		\item Let $\mathbf{B}_{\tau} = \{ \lambda_{i} \in \mathbf{B} \mid \overline{r}_{\tau-1} < i \leq \overline{r}_{\tau} \}$ be as in \ref{subsection.Collection-and-ordering-of-basis-vectors}. Then $\mathbf{B}_{\tau}$ maps under $e_{\tau-1}$ bijectively to an $A$-basis of $\tilde{\Lambda}_{\tau}$,
		and all the elements of $e_{\tau-1}(\mathbf{B}_{\tau})$ have the same absolute value in $L_{\tau}$, which is minimal among $\{ \lvert \lambda \rvert \mid 0 \neq \lambda \in \tilde{\Lambda}_{\tau} \}$.
	\end{enumerate}
\end{Proposition}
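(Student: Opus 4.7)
For (i), the exponential function $e_{\tau-1}$ is $\mathds{F}_q$-linear on $C$, and the functional equation $e_{\tau-1}(az)=\phi^{(\tau-1)}_a(e_{\tau-1}(z))$ ($a\in A$) promotes it to an $A$-module homomorphism from $(C,+)$ with its usual scalar action to $(C,\phi^{(\tau-1)})$. Hence the image of the $A$-submodule $\Lambda_\tau$ is an $A$-submodule of the target, contained in $L_\tau$ because the power series $e_{\tau-1}$ has coefficients in $L_{\tau-1}\subseteq L_\tau$ (the Drinfeld module $\phi^{(\tau-1)}$ being defined over $L_{\tau-1}$) and $L_\tau$ is complete as a finite extension of $L$. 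For (ii), $\ker(e_{\tau-1})=\Lambda_{\tau-1}$, and the spectral filtration \eqref{Equation.Direct-summand-of-Lambda} exhibits $\Lambda_\tau$ as an internal direct sum $\Lambda_{\tau-1}\oplus\sum_{\lambda\in\mathbf{B}_\tau}A\lambda$, so $\tilde\Lambda_\tau\cong\Lambda_\tau/\Lambda_{\tau-1}$ is free of rank $r_\tau$ with basis $e_{\tau-1}(\mathbf{B}_\tau)$.

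The real content lies in (iii). Fix $\lambda\in\mathbf{B}_\tau$ with $|\lambda|=s$, and expand $|e_{\tau-1}(\lambda)|$ via the product formula. For every $0\neq\mu\in\Lambda_{\tau-1}$, we claim $|\lambda-\mu|=\max(s,|\mu|)$: writing $\mu=\sum_{j\leq\overline{r}_{\tau-1}}b_j\lambda_j$, the difference $\lambda-\mu$ is a linear combination of the full SMB $\mathbf{B}$ with coefficient $1$ on $\lambda$, so orthogonality \eqref{Equation.Orthogonality} delivers this equality---including in the subtle case $|\mu|=s$. Consequently $|1-\lambda/\mu|=s/|\mu|$ whenever $|\mu|<s$ and equals $1$ otherwise. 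Only finitely many $\mu\in\Lambda_{\tau-1}$ have $|\mu|<s$ by discreteness of $\Lambda_{\tau-1}$, yielding the explicit formula
\[
|e_{\tau-1}(\lambda)|\;=\;s\cdot\prod_{\substack{0\neq\mu\in\Lambda_{\tau-1}\\|\mu|<s}}s/|\mu|,
\]
which depends only on $s$ and $\Lambda_{\tau-1}$ and is therefore the same for all $\lambda\in\mathbf{B}_\tau$.

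For minimality, every nonzero element of $\tilde\Lambda_\tau$ is $e_{\tau-1}(\xi)$ with $\xi\in\sum_{\lambda'\in\mathbf{B}_\tau}A\lambda'$ nonzero---the $\Lambda_{\tau-1}$-component of any preimage being absorbed into $\ker(e_{\tau-1})$ via the additivity of $e_{\tau-1}$. Orthogonality gives $|\xi|\geq s$, and the argument above applies verbatim with $S=|\xi|$ in place of $s$; a factor-by-factor comparison shows that either $S=s$ and the two values coincide, or $S>s$ and the extra factors indexed by $s\leq|\mu|<S$, together with the strict inequality $(S/s)^{N+1}>1$ (where $N=\#\{\mu\in\Lambda_{\tau-1}:0<|\mu|<s\}$), force $|e_{\tau-1}(\xi)|>|e_{\tau-1}(\lambda)|$. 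The one potentially tricky step is the orthogonality equality $|\lambda-\mu|=\max(|\lambda|,|\mu|)$ when $|\lambda|=|\mu|$: the ultrametric inequality alone allows $|\lambda-\mu|$ to be arbitrarily small, but the SMB orthogonality rules this out and is what makes the product formula for $|e_{\tau-1}(\lambda)|$ collapse into the clean shape above.
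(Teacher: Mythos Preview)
Your proof is correct and follows essentially the same approach as the paper's. For (i) and (ii) you argue exactly as the paper does (functional equation of $e_{\tau-1}$, kernel $\Lambda_{\tau-1}$, and the direct-sum splitting of $\Lambda_\tau$); for (iii) you derive the same product formula via orthogonality \eqref{Equation.Orthogonality}, the only difference being that you spell out the minimality step by an explicit factor-by-factor comparison, whereas the paper simply observes that the product formula is monotone in $|\lambda|$ and leaves it at that.
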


\begin{proof}
	$\tilde{\Lambda}_{\tau}$ is a well-defined subgroup of $L_{\tau}$, as $\Lambda_{\tau} \subset L_{\tau}$ and $e_{\tau-1}$ has coefficients in $L_{\tau-1} \subset L_{\tau}$. By definition, $e_{\tau-1}$ provides a morphism of $A$-modules
	from $L_{\tau-1}$ to $(L_{\tau-1}, \phi^{(\tau-1)})$, compatible with base extension to $L_{\tau}$, and
	\[
		\begin{tikzcd}
			e_{\tau-1} \colon \Lambda_{\tau}/\Lambda_{\tau-1} \ar[r, "\cong"] & \tilde{\Lambda}_{\tau},
		\end{tikzcd}
	\]	
	where the left hand side is isomorphic with $A^{r_{\tau}}$ by \eqref{Equation.Direct-summand-of-Lambda}. This shows (i) and (ii) and also that $e_{\tau-1}(\mathbf{B}_{\tau})$ is a basis. Let $\lambda \in \mathbf{B}_{\tau}$. From the orthogonality property \eqref{Equation.Orthogonality},
	\begin{equation} \label{Equation.Evaluation-Exponential-function}
		\lvert e_{\tau-1}(\lambda) \rvert = \lvert \lambda \rvert {\prod_{\mu \in \Lambda_{\tau}}}^{\prime} \lvert 1 - \lambda/ \mu \rvert = \lvert \lambda \rvert {\prod_{\mu \in \Lambda_{\tau},\, \lvert \mu \lvert \leq \lvert \lambda \rvert}}^{\prime} \lvert \lambda/\mu \rvert,
	\end{equation}
	which depends only on $\lvert \lambda \rvert$. Therefore, all the $\lvert e_{\tau-1}(\lambda) \rvert$ are equal and, in fact, minimal among $\{ \lvert e_{\tau-1}(\lambda) \rvert \mid \lambda \in \Lambda_{\tau} \smallsetminus \Lambda_{\tau-1}\}$.
\end{proof}	

\begin{Proposition} \label{Proposition.Field-equality}
	The fixed field $\tilde{L}_{\tau}$ of $U_{\tau}$ equals the field $L_{\tau-1}(\tilde{\Lambda}_{\tau})$ generated over $L_{\tau-1}$ by $\tilde{\Lambda}_{\tau}$.
\end{Proposition}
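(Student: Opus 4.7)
The plan is to use Galois theory: since $\tilde L_\tau = L_\tau^{U_\tau}$ by definition, the statement reduces to showing that $U_\tau$ is exactly the subgroup of $G_\tau$ that fixes $L_{\tau-1}(\tilde\Lambda_\tau)$ pointwise. I begin by reading off from the matrix description of $U_\tau$ that $\sigma \in U_\tau$ is characterised by two geometric conditions, namely (a) $\sigma$ fixes $\Lambda_{\tau-1}$ pointwise (triviality of the $P$-block), and (b) $\sigma(\lambda) \equiv \lambda \pmod{\Lambda_{\tau-1}}$ for every $\lambda \in \mathbf{B}_\tau$ (triviality of the $R$-block); by $A$-linearity, condition (b) then extends to every $\lambda \in \Lambda_\tau$.

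The main tool is the intertwining relation $\sigma \circ e_{\tau-1} = e_{\tau-1} \circ \sigma$, valid for every $\sigma \in \Gal(L_\tau \mid L_{\tau-1})$. This holds because the power series $e_{\tau-1}$ has coefficients in $L_{\tau-1}$, because $\sigma$ is $L_{\tau-1}$-linear, and because $\sigma$ is continuous for the unique valuation topology on the finite extension $L_\tau$ of $L$. I shall also use that $e_{\tau-1}$ is $\mathds{F}$-additive on $C$ with $\ker(e_{\tau-1}) = \Lambda_{\tau-1}$.

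For $U_\tau \subseteq \Gal(L_\tau \mid L_{\tau-1}(\tilde\Lambda_\tau))$, fix $\sigma \in U_\tau$. Condition (a) gives $\sigma|_{L_{\tau-1}} = \mathrm{id}$, and for any $\lambda \in \Lambda_\tau$ condition (b) combined with additivity and the kernel property of $e_{\tau-1}$ yields $e_{\tau-1}(\sigma(\lambda)) = e_{\tau-1}(\lambda)$. The intertwining relation converts this into $\sigma(e_{\tau-1}(\lambda)) = e_{\tau-1}(\lambda)$, so $\sigma$ fixes $\tilde\Lambda_\tau$ pointwise and hence fixes $L_{\tau-1}(\tilde\Lambda_\tau)$. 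Conversely, let $\sigma$ fix $L_{\tau-1}(\tilde\Lambda_\tau)$; then $\sigma$ fixes $\Lambda_{\tau-1} \subset L_{\tau-1}$ pointwise, establishing (a), and for each $\lambda \in \mathbf{B}_\tau$ the equation $\sigma(e_{\tau-1}(\lambda)) = e_{\tau-1}(\lambda)$ paired with the intertwining relation forces $\sigma(\lambda) - \lambda \in \ker(e_{\tau-1}) = \Lambda_{\tau-1}$, establishing (b); hence $\sigma \in U_\tau$.

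The only delicate point is setting up the dictionary between the matrix-theoretic definition of $U_\tau$ via the block decomposition and the two geometric conditions (a)--(b); once this translation is clear, both inclusions reduce cleanly to the kernel identity $\ker(e_{\tau-1}) = \Lambda_{\tau-1}$ combined with the commuting of $\sigma$ and $e_{\tau-1}$ over $L_{\tau-1}$.
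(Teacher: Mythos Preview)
Your proof is correct and follows essentially the same approach as the paper's: both directions hinge on the $\Lambda_{\tau-1}$-periodicity of $e_{\tau-1}$ (equivalently, your intertwining relation combined with $\ker(e_{\tau-1}) = \Lambda_{\tau-1}$) to translate the block conditions on $\sigma$ into the fixing of $\tilde{\Lambda}_{\tau}$. Your write-up is somewhat more explicit about why $\sigma$ commutes with $e_{\tau-1}$ (coefficients in $L_{\tau-1}$, continuity), but the argument is the same.
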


\begin{proof}
	By \eqref{Equation.Galois-permutation-evaluated-on-basis-vector} and the $\Lambda_{\tau-1}$-periodicity of $e_{\tau-1}$, $U_{\tau}$ fixes $\tilde{\Lambda}_{\tau}$. On the other hand, suppose that $\sigma \in G_{\tau}$ fixes $L_{\tau-1}(\tilde{\Lambda}_{\tau})$. Then 
	$\sigma(\lambda_{j}) = \lambda_{j}$ for $j \leq \overline{r}_{\tau-1}$ and 
	\[
		\sigma(\lambda_{j}) = \sum_{1 \leq i \leq \overline{r}_{\tau}} a_{i,j} \lambda_{i}
	\]
	for $\overline{r}_{\tau-1} < j \leq \overline{r}_{\tau}$ with $a_{i,j} = \delta_{i,j}$ if $\overline{r}_{\tau-1} < i \leq \overline{r}_{\tau}$, as $\sigma(\lambda_{j}) \equiv \lambda_{j} \pmod{\Lambda_{\tau-1}}$. Hence $\sigma \in U_{\tau}$.
\end{proof}

\subsection{} As $\tilde{\Lambda}_{\tau}$ is an $A$-sublattice in $(L_{\tau-1}, \phi^{(\tau-1)})$,
\begin{equation}
	\tilde{L}_{\tau} = L_{\tau-1}(\tilde{\Lambda}_{\tau-1}) = L_{\tau-1}(V_{\tau}),
\end{equation}
where $V_{\tau}$ is the $\mathds{F}$-vector space of dimension $r_{\tau}$ generated by $e_{\tau-1}(\mathbf{B}_{\tau})$. Further, $V_{\tau}$ is stable under $H_{\tau} = \Gal(\tilde{L}_{\tau}|L_{\tau-1})$, and thus
$H_{\tau}$ embeds into 
\[
	\GL(V_{\tau}) \overset{\cong}{\longrightarrow} \GL(r_{\tau}, \mathds{F}),
\] 
where the last isomorphism is via the choice of the ordered basis $e_{\tau-1}(\mathbf{B}_{\tau})$, i.e., the choice of $\mathbf{B}_{\tau}$.
This is in keeping with the fact the projection of $\sigma \in G$ onto its $\tau$-part (see \eqref{Equation.Block-matrix.structure}) is an element of $\GL(r_{\tau}, \mathds{F})$.

\begin{Definition}
	A finite $\mathds{F}$-vector subspace $V$ of $L^{\mathrm{sep}}$ is called \textbf{pure of weight $w(V) \in \mathds{Q}$} if all the non-vanishing $x \in V$ satisfy $\log x = \log_{q} \lvert x \rvert = w(V)$.
\end{Definition}

We summarize the preceding considerations as follows:

\begin{Proposition}
	For each $\tau$ with $1 \leq \tau \leq t$ the intermediate field $\tilde{L}_{\tau}$ of $L_{\tau}|L_{\tau-1}$ satisfies:
	\begin{enumerate}[label=$\mathrm{(\roman*)}$]
		\item $\tilde{L}_{\tau} = L_{\tau-1}(\tilde{\Lambda}_{\tau}) = L_{\tau-1}(V_{\tau})$ is obtained by adjoining the $H_{\tau}$-stable pure $\mathds{F}$-subspace $V_{\tau}$ of dimension $r_{\tau}$ (and a certain weight given by \eqref{Equation.Evaluation-Exponential-function}).
		\item The Galois group $U_{\tau}$ of $L_{\tau}$ over $\tilde{L}_{\tau}$ is an elementary abelian $p$-group.
	\end{enumerate}
\end{Proposition}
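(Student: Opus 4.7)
The plan is to recognize that this proposition is essentially a summary of facts already assembled in Section~\ref{Section.Structure-of-the-field-extension}; my proof will consist mainly of pointing to the appropriate pieces, so I expect no genuine conceptual difficulty.

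For part (i), I would first note that the equality $\tilde{L}_{\tau} = L_{\tau-1}(\tilde\Lambda_\tau)$ is exactly Proposition~\ref{Proposition.Field-equality}. For the second equality $L_{\tau-1}(\tilde\Lambda_\tau) = L_{\tau-1}(V_\tau)$, the observation is that $V_\tau$ is by definition the $\mathds{F}$-span of the $A$-basis $e_{\tau-1}(\mathbf{B}_\tau)$ of $\tilde\Lambda_\tau$, and since the $A$-action on $\tilde\Lambda_\tau$ is through $\phi^{(\tau-1)}$, whose coefficients lie in $L_{\tau-1}$, adjoining $V_\tau$ automatically produces the whole $A$-module $\tilde\Lambda_\tau$; the reverse inclusion is trivial. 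The purity claim then follows from formula \eqref{Equation.Evaluation-Exponential-function}, which shows that $\lvert e_{\tau-1}(\lambda)\rvert$ depends only on $\lvert\lambda\rvert$ — constant across $\mathbf{B}_\tau$ by definition of the packets in \ref{subsection.Collection-and-ordering-of-basis-vectors} — combined with the ultrametric inequality applied to $\mathds{F}$-linear combinations of $e_{\tau-1}(\mathbf{B}_\tau)$. The $H_\tau$-stability of $V_\tau$ I would read off the block structure \eqref{Equation.Block-matrix.structure}: modulo $\Lambda_{\tau-1}$, any $\sigma \in G_\tau$ acts on $\mathbf{B}_\tau$ via its diagonal $R$-block in $\GL(r_\tau,\mathds{F})$, and this transports through the $\Lambda_{\tau-1}$-periodic map $e_{\tau-1}$ to an $\mathds{F}$-linear action on $V_\tau$.

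For part (ii), the argument is purely matrix-theoretic from the description of $U_\tau$: every element is of the form $I + N$ where $N$ is supported only in the off-diagonal block $Q$ of \eqref{Equation.Block-matrix.structure}, with entries in $A$. Two such elements $I+N_1$ and $I+N_2$ multiply to $I + N_1 + N_2$ (the cross term $N_1 N_2$ vanishes because $Q$-blocks compose trivially), so $U_\tau$ embeds additively into the matrix space $M_{r_\tau \times \overline{r}_{\tau-1}}(A)$. Since $A$ has characteristic $p$, this additive group is an $\mathds{F}_p$-vector space, and the finite subgroup $U_\tau$ is therefore elementary abelian of exponent $p$. The only point that requires any care along the way is the $\mathds{F}$-linearity of the induced $H_\tau$-action on $V_\tau$ in (i), but this is forced by the fact that the $R$-block lands in $\GL(r_\tau,\mathds{F})$ rather than in the larger group $\GL(r_\tau,A)$ — a consequence of \eqref{Equation.Bound-for-matrix-entries} applied to indices with $\lvert\lambda_i\rvert = \lvert\lambda_j\rvert$.
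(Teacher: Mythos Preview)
Your proposal is correct and matches the paper's own treatment: the proposition is introduced there with ``We summarize the preceding considerations as follows,'' and carries no separate proof, so your r\'esum\'e of Proposition~\ref{Proposition.Field-equality}, subsection~2.4, formula~\eqref{Equation.Evaluation-Exponential-function}, and the block description of $U_\tau$ is exactly what is intended. One small sharpening: for the purity of $V_\tau$ the ultrametric inequality alone only gives an upper bound; the clean way to get equality is to use the $\mathds{F}$-linearity of $e_{\tau-1}$ together with the orthogonality~\eqref{Equation.Orthogonality}, so that a nonzero $\mathds{F}$-combination $\sum a_i e_{\tau-1}(\lambda_i) = e_{\tau-1}\bigl(\sum a_i \lambda_i\bigr)$ has $\lvert\sum a_i\lambda_i\rvert$ equal to the common value on $\mathbf{B}_\tau$, and then~\eqref{Equation.Evaluation-Exponential-function} applies.
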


\begin{Corollary} \label{Corollary.Ramification-index}
	With notations as before, the ramification index $e(L_{\tau}|\tilde{L}_{\tau})$ equals $\# U_{\tau}$ or $p^{-1} \# U_{\tau}$.
\end{Corollary}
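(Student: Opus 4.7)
The plan is to combine the preceding proposition, which identifies $U_{\tau} = \Gal(L_{\tau}|\tilde{L}_{\tau})$ as an elementary abelian $p$-group, with the standard structure of Galois extensions of complete discretely valued fields with finite residue field.

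First I would observe that $L_{\tau}$ and $\tilde{L}_{\tau}$ are complete discretely valued fields with finite residue fields: both are finite extensions of $L$, and $L$, being a finite extension of $K_{\infty} = \mathds{F}_{q}((T^{-1}))$, inherits from $C$ a unique absolute value and has residue field finite over $\mathds{F}_{q}$. Consequently the valuation of $\tilde{L}_{\tau}$ extends uniquely to $L_{\tau}$, and the fundamental identity
\[
\#U_{\tau} \;=\; [L_{\tau}:\tilde{L}_{\tau}] \;=\; e(L_{\tau}|\tilde{L}_{\tau}) \cdot f(L_{\tau}|\tilde{L}_{\tau})
\]
applies.

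Next, I would let $I \subseteq U_{\tau}$ denote the inertia subgroup, so that $\#I = e(L_{\tau}|\tilde{L}_{\tau})$, and the quotient $U_{\tau}/I$ is the Galois group of the residue field extension. Since both residue fields are finite, this quotient is cyclic (generated by the Frobenius).

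The decisive observation is then that $U_{\tau}/I$, as a quotient of the elementary abelian $p$-group $U_{\tau}$, is itself of exponent dividing $p$. A cyclic group of exponent at most $p$ has order $1$ or $p$; hence $f(L_{\tau}|\tilde{L}_{\tau}) \in \{1, p\}$, and $e(L_{\tau}|\tilde{L}_{\tau})$ equals $\#U_{\tau}$ or $p^{-1}\#U_{\tau}$ in the two cases respectively. There is no serious obstacle here; the proof is a brief application of local Galois theory, and the only point requiring care is invoking finiteness of the residue field of $L$ (which traces back to the assumption that $L$ is a finite extension of $K_{\infty}$) to guarantee the cyclicity of $U_{\tau}/I$.
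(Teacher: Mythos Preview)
Your proof is correct and follows essentially the same approach as the paper: the paper phrases the argument via the higher ramification filtration $(U_{\tau,i})_{i \geq -1}$ from Serre, but the only step actually used is that $U_{\tau}/U_{\tau,0}$ (your $U_{\tau}/I$) is cyclic, hence of order $1$ or $p$ since $U_{\tau}$ is $p$-elementary abelian. Your version is arguably cleaner in that it invokes only the inertia subgroup rather than the full ramification filtration.
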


\begin{proof}
	Consider the filtration of $U_{\tau}$ given by the higher ramification groups $U_{\tau, i}$ (see \cite{Serre68} Chapitre 4), $i = -1,0,1,\dots$. Then
	\begin{itemize}
		\item[] $U_{\tau,-1} = U_{\tau}$; $U_{\tau}/U_{\tau,0}$ is cyclic;
		\item[] $U_{\tau,0}/U_{\tau,1}$ is cyclic of order prime to $p$, hence trivial, and 
		\item[] all the higher $U_{\tau,i}/U_{\tau,i+1}$ are $p$-groups.
	\end{itemize}
	Hence, as $U_{\tau}$ is $p$-elementary abelian, \textbf{either} $U_{\tau} = U_{\tau,0}$ and $L_{\tau}|\tilde{L}_{\tau}$ is completely ramified of degree $\# U_{\tau}$ \textbf{or} the residue class degree
	$[U_{\tau} : U_{\tau,0}]$ is $p$ and the ramification index equals $p^{-1} \# U_{\tau}$.
\end{proof}

\subsection{} Let's have a closer look at the extension $\tilde{L}_{\tau} = L_{\tau-1}(V_{\tau})$ of $L_{\tau-1}$. We assume that all the field extensions considered are subfields of the separable closure $L^{\mathrm{sep}}$ of $L$. This
identifies the residue class fields with subfields of the residue class field of $L^{\mathrm{sep}}$, which is a separable closure $\mathds{F}^{\mathrm{sep}}$ of $\mathds{F}$.

Let $d \in \mathds{N}$ be the precise denominator of the weight $w(V_{\tau})$, and let $L_{\tau-1}'$ be a completely ramified separable extension of $L_{\tau-1}$ of degree $d$. Then there exists $y \in L_{\tau-1}'$ with
$v(y) = w(V_{\tau}) = v(x)$ for each $0 \neq x \in V_{\tau}$. Replacing $V_{\tau}$ with $V_{\tau}' \defeq y^{-1} V_{\tau}$,
\begin{equation}
	L_{\tau-1}'(V_{\tau}) = L_{\tau-1}'(V_{\tau}'), \quad \text{where } w(V_{\tau}') = 0.
\end{equation}
Now, since $v(x-x') = v(x) = v(x')$ for $x \neq x' \in V_{\tau} \smallsetminus \{0\}$, different elements of $V_{\tau}'$ reduce to different elements in $\mathds{F}^{\mathrm{sep}}$, that is, reduction modulo the maximal ideal yields
an $\mathds{F}$-isomorphism
\begin{equation}
	\begin{tikzcd}
		V_{\tau}' \ar[r, "\cong"]	& \overline{V}_{\tau}' \ar[r, hook] &\mathds{F}^{\mathrm{sep}}.
	\end{tikzcd}
\end{equation}

Let $\mathds{F}_{\tau-1}$, $\mathds{F}_{\tau-1}'$ be the residue class fields of $L_{\tau-1}$, $L_{\tau-1}'$, respectively. Now:
\begin{itemize}
	\item[] $V_{\tau}$ is stable under $H_{\tau} = \Gal(\tilde{L}_{\tau}|L_{\tau-1})$, so
	\item[] $V_{\tau}'$ is stable under $H'_{\tau} \defeq \Gal(L_{\tau-1}'(V_{\tau}') | L_{\tau-1}')$, and
	\item[] $\overline{V}_{\tau}'$ is stable under $\Gal( \mathds{F}_{\tau-1}'(\overline{V}_{\tau}')| \mathds{F}_{\tau-1}')$.
\end{itemize}
The latter is generated by a suitable Frobenius element (i.e., $x \mapsto x^{\#(\mathds{F}_{\tau-1}')}$), which by our choice of the basis $\mathbf{B}_{\tau}$ for $V_{\tau}$ (thus bases for $V_{\tau}'$ and $\overline{V}_{\tau}'$)
corresponds to a matrix in $\GL(r_{\tau}, \mathds{F})$. Let $f_{\tau}$ be its multiplicative order. Then
\begin{equation}
	f_{\tau} = [\mathds{F}_{\tau-1}'(\overline{V}_{\tau}') : \mathds{F}_{\tau-1}'] = [L_{\tau-1}'(V_{\tau}') : L_{\tau-1}'],
\end{equation}
where the last equality comes from Hensel's Lemma. In particular, $L_{\tau-1}'(V_{\tau}') = L_{\tau-1}'(V_{\tau})$ is unramified of degree $f_{\tau}$ over $L_{\tau-1}'$. Consider the diagram
\begin{equation}
	\begin{tikzcd}[row sep=small]
																										& L_{\tau-1}'(V_{\tau}) \ar[dd, dash] 	\\
		\tilde{L}_{\tau} = L_{\tau-1}(V_{\tau}) \ar[ur, dash]	\ar[dd, dash]	&														\\
																										& L_{\tau-1}'										\\
		M_{\tau} \ar[ur, dash]	\ar[d, dash]											& \\
		L_{\tau-1} \ar[uur, dash]
	\end{tikzcd}	
\end{equation}
where $M_{\tau} \defeq L_{\tau-1}(V_{\tau}) \cap L_{\tau-1}'$. We see from the above:

\begin{Proposition} \label{Proposition.Isomorphies-of-special-Galois-groups}
	\[
		\begin{tikzcd}
			\Gal(\mathds{F}_{\tau-1}'(\overline{V}_{\tau-1}')|\mathds{F}_{\tau-1}') \ar[r, "\cong"]	& \Gal(L_{\tau-1}'(V_{\tau}) | L_{\tau-1}') \ar[r, "\cong"] & \Gal(L_{\tau-1}(V_{\tau})|M_{\tau})),
		\end{tikzcd}
	\]
	where the groups are cyclic of order $f_{\tau}$, $L_{\tau-1}(V_{\tau})|M_{\tau}$ is unramified and $M_{\tau}|L_{\tau-1}$ is completely ramified.
\end{Proposition}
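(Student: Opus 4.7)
My plan is to obtain the two isomorphisms separately and then read off the ramification behavior from them. The first isomorphism in the chain,
\[
\Gal(\mathds{F}_{\tau-1}'(\overline{V}_{\tau}')\,|\,\mathds{F}_{\tau-1}')\;\cong\;\Gal(L_{\tau-1}'(V_{\tau})\,|\,L_{\tau-1}'),
\]
is essentially given already in the paragraph preceding the proposition: the rescaled space $V_{\tau}'$ reduces injectively into $\mathds{F}^{\mathrm{sep}}$, and Hensel's Lemma lifts the residue extension to an unramified extension of $L_{\tau-1}'$ of the same degree $f_{\tau}$; the two Galois groups are then canonically identified via reduction modulo the maximal ideal.

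For the second isomorphism I would invoke the classical translation theorem of Galois theory. Since $L_{\tau-1}(V_{\tau}) = \tilde{L}_{\tau}$ is Galois over $L_{\tau-1}$ with group $H_{\tau}$ (by Proposition \ref{Proposition.Field-equality} and the remark that $U_{\tau}$ is normal in $\Gal(L_{\tau}|L_{\tau-1})$), restriction gives a natural isomorphism
\[
\Gal\bigl(L_{\tau-1}(V_{\tau})\cdot L_{\tau-1}'\,\big|\,L_{\tau-1}'\bigr)\;\xrightarrow{\;\cong\;}\;\Gal\bigl(L_{\tau-1}(V_{\tau})\,\big|\,L_{\tau-1}(V_{\tau})\cap L_{\tau-1}'\bigr),
\]
and by definition $L_{\tau-1}(V_{\tau})\cdot L_{\tau-1}' = L_{\tau-1}'(V_{\tau})$ and $L_{\tau-1}(V_{\tau})\cap L_{\tau-1}' = M_{\tau}$. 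This yields the second isomorphism and, combined with the first, shows that all three groups are cyclic of order $f_{\tau}$.

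The claim that $M_{\tau}|L_{\tau-1}$ is completely ramified is immediate: $M_{\tau}$ is an intermediate field of the totally ramified extension $L_{\tau-1}'|L_{\tau-1}$, and any subextension of a totally ramified extension is itself totally ramified. For the unramifiedness of $L_{\tau-1}(V_{\tau})|M_{\tau}$, I would argue by degree and residue-field bookkeeping. Under the restriction isomorphism, a generator of $\Gal(L_{\tau-1}'(V_{\tau})|L_{\tau-1}')$ — which acts as a Frobenius of order $f_{\tau}$ on the residue field $\mathds{F}_{\tau-1}'(\overline{V}_{\tau}')$ — corresponds to a generator of $\Gal(L_{\tau-1}(V_{\tau})|M_{\tau})$, and since the residue field of $M_{\tau}$ coincides with $\mathds{F}_{\tau-1}=\mathds{F}_{\tau-1}'$ (by complete ramification of $M_{\tau}|L_{\tau-1}$), the induced action on the residue extension of $L_{\tau-1}(V_{\tau})|M_{\tau}$ still has order $f_{\tau}$. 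Hence the residue degree is at least $f_{\tau}$; but the total degree $[L_{\tau-1}(V_{\tau}):M_{\tau}]$ equals $f_{\tau}$, so the ramification index must be $1$.

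The only delicate point — and the step I would double-check carefully — is that the Frobenius generator really descends through the restriction isomorphism to a genuine Frobenius on the residue field of $L_{\tau-1}(V_{\tau})$; this is a compatibility of reduction with restriction of Galois automorphisms, which holds because the element $y\in L_{\tau-1}'$ used in the rescaling is fixed by the whole group $\Gal(L_{\tau-1}'(V_{\tau})|L_{\tau-1}')$, so its action on $V_{\tau}$ and on $V_{\tau}'$ differ only by the scalar $y$, which plays no role in the reduction.
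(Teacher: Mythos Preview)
Your proposal is correct and follows the same route as the paper, which in fact offers no separate proof but simply writes ``We see from the above'' after the discussion in \S2.8. You have made explicit the two ingredients the paper leaves implicit: the Hensel/reduction argument for the first isomorphism and the Galois translation theorem $\Gal(EF'|F')\cong\Gal(E|E\cap F')$ for the second; your ramification bookkeeping for $L_{\tau-1}(V_{\tau})|M_{\tau}$ (including the caveat about the Frobenius descending compatibly, which is indeed the only point requiring care) is a reasonable way to flesh out what the paper takes for granted.
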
	

\begin{Remark}
	Note that neither $L_{\tau-1}'$ nor $M_{\tau}$ is canonically determined; only their degrees over $L_{\tau-1}$ are intrinsically characterized by $[L_{\tau-1}' : L_{\tau-1}] = d_{\tau} = \mathrm{denom}(w(V_{\tau}))$ and
	$[M_{\tau}: L_{\tau-1}] = e(\tilde{L}_{\tau}| L_{\tau-1})$.
\end{Remark}

\section{An example} \label{Section.An-example}

\subsection{} We consider in more detail the special case where the spectrum $\spec_{A}(\Lambda)$ of $\Lambda$ is $(\lvert \lambda_{1} \rvert < \lvert \lambda_{2} \rvert = \dots = \lvert \lambda_{r} \rvert$). By Proposition \ref{Proposition.Spectrum-and-newton-polygon}, this corresponds
to the following behavior of the Newton polygon:
\begin{equation} \stepcounter{subsubsection} \stepcounter{subsubsection}
	 \text{The break points of $\mathrm{NP}( \Phi_{T}(X))$ are } 	(1,-1), ~(q, v(g_{1})), ~(q^{r}, v(g_{r})).
\end{equation}
Such behavior is realized in particular by the Drinfeld module $\phi$ with
\begin{equation} \label{Equation.Phi-equation-drinfeld-module}
	\phi_{T}(X) = TX + gX^{q} + \Delta X^{q^{r}} \qquad (\Delta \neq 0),
\end{equation}
where the intermediate terms $g_{i}X^{q^{i}}$ ($1 < i < r$) vanish and $(q, v(g))$ lies below the line joining $(1,-1)$ and $(q^{r}, v(\Delta))$. As $\spec_{A}(\Lambda)$ and the properties derived from it depend only on $\mathrm{NP}(\phi_{T}(X))$,
we assume for the rest of this section that $\phi$ is given by \eqref{Equation.Phi-equation-drinfeld-module}. Our aim is to determine $\lvert \lambda_{1} \rvert$ and $\lvert \lambda_{2} \rvert$ from $v(g)$ and $v(\Delta)$, and to draw conclusions. Also, for psychological reasons, we mainly work
with $\log x = \log_{q} \lvert x \rvert = -v(x)$ instead of the valuation $v$. 

\begin{Remark} \label{Remark.Drinfeld-modules-1-spars}
	Rank-$r$ Drinfeld modules given by \eqref{Equation.Phi-equation-drinfeld-module} are called \textbf{$\boldsymbol{1}$-sparse} in \cite{Gekeler2017}.
\end{Remark}

\begin{Proposition} \label{Proposition.Log-forumla}
	In the given situation, put $j \defeq g^{(q^{r}-1)/(q-1)}/\Delta$. Let $\{ \lambda_{1}, \lambda_{2}, \dots, \lambda_{r}\}$ be an SMB of the period lattice $\Lambda$ of $\phi$, 
	$\lvert \lambda_{1} \rvert < \lvert \lambda_{2} \rvert = \dots = \lvert \lambda_{r} \rvert$. Define $s \defeq \log(\lambda_{2}/\lambda_{1})$, with integral part $n = [s]$. Then the formula
	\begin{equation} \label{Proposition.Eq.log-evaluation}
		\log j = (q^{r-1}-1)\left(\frac{1}{q-1} + s-n\right)q^{n+1}
	\end{equation}
	holds.
\end{Proposition}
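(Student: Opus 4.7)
My plan is to extract $|\lambda_{1}|$ and $|\lambda_{2}|$ from $\mathrm{NP}(\phi_{T})$ by evaluating $e_{\Lambda}$ at two judiciously chosen representatives of $T^{-1}\Lambda/\Lambda$, and then to match against the two slopes. By Proposition \ref{Proposition.Spectrum-and-newton-polygon} together with the given break points $(1,-1),(q,v(g)),(q^{r},v(\Delta))$, the $q^{r}-1$ non-zero roots of $\phi_{T}(X)$ split into $q-1$ \emph{small} roots of logarithm $(v(g)+1)/(q-1)$ and $q^{r}-q$ \emph{big} roots of logarithm $L\defeq(v(\Delta)-v(g))/(q^{r}-q)$.

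The small roots are exactly $e_{\Lambda}(a\lambda_{1}/T)$ for $a\in\mathds{F}^{*}$: such $z=a\lambda_{1}/T$ satisfies $|z|=|\lambda_{1}|/q<|\mu|$ for every non-zero $\mu\in\Lambda$, so by orthogonality every factor of the product defining $e_{\Lambda}(z)$ has absolute value $1$, whence $|e_{\Lambda}(z)|=|\lambda_{1}|/q$. Matching this with the first slope of $\mathrm{NP}(\phi_{T})$ yields $\log|\lambda_{1}|=(q+v(g))/(q-1)$.

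For a big root I take $z=\lambda_{2}/T$, so $|z|=q^{s-1}|\lambda_{1}|$. Assuming first $s\notin\mathds{Z}$, no lattice point $\mu$ meets $|\mu|=|z|$, and the non-zero $\mu$ with $|\mu|<|z|$ are exactly the $q^{n}-1$ elements $a\lambda_{1}$ with $0\neq a\in A$ and $\deg a\leq n-1$. Equation \eqref{Equation.Evaluation-Exponential-function} then gives
\[
\log|e_{\Lambda}(\lambda_{2}/T)| \;=\; q^{n}\log|z|-(q^{n}-1)\log|\lambda_{1}|-(q-1)\sum_{d=0}^{n-1}d\,q^{d},
\]
whose last sum has the standard closed form $\bigl((n-1)q^{n+1}-nq^{n}+q\bigr)/(q-1)^{2}$. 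Equating the right-hand side with $L$, substituting the formula for $\log|\lambda_{1}|$ from the previous step, and rewriting $\log j = v(\Delta)-\tfrac{q^{r}-1}{q-1}v(g)$, a direct algebraic simplification collapses to \eqref{Proposition.Eq.log-evaluation}.

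The edge case $s=n\in\mathds{Z}$ requires separate attention, because the lattice points $a\lambda_{1}$ with $\deg a=n-1$ then satisfy $|\mu|=|z|$ and the simple product formula does not apply verbatim. I expect this to be the only genuine obstacle; it can be handled either by a direct check that for suitable $z$ among the big roots all the extra factors $|1-z/\mu|$ equal $1$, or by a continuity argument, since both sides of \eqref{Proposition.Eq.log-evaluation} depend only on $\mathrm{NP}(\phi_{T})$ and the right-hand side has matching limits as $s\to n^{\pm}$.
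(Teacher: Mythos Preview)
The paper does not actually prove this proposition: it simply cites \cite{Gekeler2017}, Corollary~5.7(i). Your proposal therefore supplies what the paper omits, and your argument is correct. The computation of $\log\lvert\lambda_{1}\rvert$ via $e_{\Lambda}(\lambda_{1}/T)$ is exactly what the paper itself does immediately afterwards in (3.3.4); your second step, evaluating $\lvert e_{\Lambda}(\lambda_{2}/T)\rvert$ through the product formula and matching with the big slope, is the natural continuation, and the algebra checks out: writing $\log j=(q^{r}-q)\bigl(\tfrac{\log g}{q-1}+L\bigr)$ and substituting your expression for $L$ collapses to the stated formula.

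On the edge case $s=n\in\mathds{Z}$: your first suggested fix is the cleaner one and works without qualification. For $\mu=a\lambda_{1}$ with $\deg a=n-1$ one has $\lvert\mu\rvert=\lvert z\rvert$, but since $z=T^{-1}\lambda_{2}$ lies in the $K_{\infty}$-span of the SMB, orthogonality \eqref{Equation.Orthogonality} gives $\lvert z-\mu\rvert=\max(\lvert z\rvert,\lvert\mu\rvert)=\lvert\mu\rvert$, hence $\lvert 1-z/\mu\rvert=1=\lvert z/\mu\rvert$; so these boundary factors are harmless and your displayed formula for $\log\lvert e_{\Lambda}(\lambda_{2}/T)\rvert$ remains valid verbatim. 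The continuity argument is less clean, since making it rigorous requires knowing in advance that $s$ depends continuously on $(v(g),v(\Delta))$, which is close to what you are proving; I would drop it in favour of the direct check.
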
	

\begin{proof}
	This is formula (i) in Corollary 5.7 of \cite{Gekeler2017}. (Note the different numbering in \cite{Gekeler2017}, where $\lambda_{r}, \lambda_{r-1}, \dots, \lambda_{1}$ in this order is an SMB.)
\end{proof}

We will use \ref{Proposition.Log-forumla} to find lower bounds for the ramification index of $L(\lambda_{1}, \lambda_{2})$ over $L(\lambda_{1})$, and thus for the degree of $L(\Lambda)$ over $L$.
\subsection{} For $1 \leq i \leq r$ let
\begin{equation} 
	\mu_{i} = e_{\Lambda}(\lambda_{i}/T).
\end{equation}
Then $\{ \mu_{1}, \dots, \mu_{r}\}$ is an $\mathds{F}$-basis of $\phi[T]$ and 
\begin{equation}
	\lvert \mu_{1} \rvert < \lvert \mu_{2} \rvert = \dots = \lvert \mu_{r} \rvert
\end{equation}
holds, as well as
\begin{equation} \label{Eq.Evalution-sum-mui}
	\Big\lvert \sum_{1 \leq i \leq r} a_{i}\mu_{i} \Big\rvert = \begin{cases} \lvert \mu_{1} \rvert,	&a_{2} = \dots = a_{r} = 0, \\ \lvert \mu_{2} \rvert, 	&\text{otherwise} \end{cases}
\end{equation}
for coefficients $a_{i} \in \mathds{F}$, not all zero, see \cite{Gekeler2017} Lemma 3.4. One easly derives from \eqref{Eq.Evalution-sum-mui} and \eqref{Equation.Orthogonality} that 
\[
	\lvert \mu_{1} \rvert = \lvert \lambda_{1}/T \rvert, \text{ i.e., } \log \mu_{1} = \log \lambda_{1} - 1
\]
and
\[
	\lvert g/T \rvert = \lvert \mu_{1} \rvert^{1-q}, \text{ i.e., } (q-1) \log \mu_{1} = 1 - \log g,
\]
that is
\begin{equation} \label{Eq.Equality-lambda1}
	\log \lambda_{1} = (q - \log g)/(q-1).
\end{equation}
Hence \eqref{Proposition.Eq.log-evaluation} together with \eqref{Eq.Equality-lambda1} yields $\log g$ and $\log \Delta$ in terms of $\lvert \lambda_{1} \rvert$ and $\lvert \lambda_{2} \rvert$.
\subsection{} Let $\varphi$ be the real function
\begin{equation}
	s \longmapsto (q^{r-1} - 1) \left( \frac{1}{q-1} + s- n\right)q^{n+1}
\end{equation}
that occurs in \eqref{Proposition.Eq.log-evaluation}, i.e., $\log j = \varphi(s)$. It is continuous, monotonically increasing, convex, piecewise linear, and bijective from the real interval $[0,\infty)$ to $[\varphi(0), \infty)$, with $\varphi(0) = q(q^{r-1}-1)/(q-1)$. Let
\begin{equation}
	\begin{tikzcd}
		{\psi \colon [\varphi(0), \infty)} \ar[r, "{\cong}"] & {[0,\infty)}
	\end{tikzcd}
\end{equation}
be its inverse; it has similar properties, with \enquote{convex} replaced with \enquote{concave}. Then $s = \psi(\log j)$, i.e.,
\begin{equation}
	\log \lambda_{2} = \log \lambda_{1} + \psi(\log j) = (q- \log g)/(q-1) + \psi( \log j).
\end{equation}
In particular, as for fixed $g$, $\log j$ can be made arbitrarily large letting $\Delta$ tend to zero, also $\log \lambda_{2}$ can be made arbitrarily large.

Assuming $\log g = 0$ and $\log \Delta$ very small with numerator coprime to $p$, the formula derived from \eqref{Proposition.Eq.log-evaluation}
\begin{equation}
	-\log \Delta / q^{n+1} = (q^{r-1}-1)/(q-1) + (q^{r-1} - 1)(s-n)
\end{equation}
shows that $q^{n+1}$ (with $n \gg 0$) divides the denominator of $s = \log( \lambda_{2}/\lambda_{1})$. Hence the ramification index of $L(\lambda_{1}, \lambda_{2})$ over $K_{\infty}$, and in particular the extension degree
$[L(\lambda_{1}, \lambda_{2}) : K_{\infty}]$ is divisible by $q^{n+1}$. Thus we have proved:

\begin{Theorem} \label{Theorem.Ramification-over-L-unbounded}
	The field $L(\Lambda) = L(\tor(\phi))$ of periods of $\phi$, where $\phi$ is a Drinfeld $A$-module of rank $r \geq 2$ over $L$, may have arbitrarily large ramification index over $L$. In particular, its degree $[L(\Lambda) : L]$ is unbounded.
\end{Theorem}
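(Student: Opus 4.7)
The plan is to show that for every positive integer $M$, one can choose a rank-$r$ Drinfeld $A$-module $\phi$ over $L$ with $q^{M} \mid e(L(\Lambda)|K_{\infty})$. Since $e(L|K_{\infty})$ is a fixed constant, this forces $e(L(\Lambda)|L)$, and a fortiori $[L(\Lambda):L]$, to be unbounded as $M$ varies. By the observations surrounding \eqref{Equation.Phi-equation-drinfeld-module}, it suffices to work within the family of $1$-sparse modules $\phi_{T}(X) = TX + gX^{q} + \Delta X^{q^{r}}$, reducing the problem to an explicit choice of $g,\Delta \in L$.

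Concretely, I would take $g = 1$ and $\Delta = T^{-N}$ for a positive integer $N$ coprime to $p$ satisfying $N > q + q^{2} + \dots + q^{r-1}$; both lie in $K_{\infty} \subset L$, with $v(g) = 0$ and $v(\Delta) = N$. A short computation shows that the lower bound on $N$ forces the point $(q, 0) = (q, v(g))$ to lie strictly below the segment joining $(1,-1)$ and $(q^{r}, N)$, so $\mathrm{NP}(\phi_{T}(X))$ has precisely the three break points $(1,-1)$, $(q,0)$, $(q^{r},N)$. Proposition~\ref{Proposition.Spectrum-and-newton-polygon} then yields $\spec_{A}(\Lambda) = (\lvert\lambda_{1}\rvert < \lvert\lambda_{2}\rvert = \dots = \lvert\lambda_{r}\rvert)$, while $\log j = -\log\Delta = N$ since $\log g = 0$.

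Now I would apply Proposition~\ref{Proposition.Log-forumla}: writing $s = \log(\lambda_{2}/\lambda_{1})$ and $n = [s]$, formula \eqref{Proposition.Eq.log-evaluation} rearranges to
\[
    s - n \;=\; \frac{N}{(q^{r-1}-1)\,q^{n+1}} \;-\; \frac{1}{q-1}.
\]
Since $\gcd(N,q) = \gcd(q^{r-1}-1, q) = 1$, the first summand has denominator divisible by $q^{n+1}$ in lowest terms, while the second has denominator coprime to $q$; hence the denominator of $s$ is divisible by $q^{n+1}$. Combined with $\log\lambda_{1} = q/(q-1)$ from \eqref{Eq.Equality-lambda1}, the denominator of $\log\lambda_{2} = \log\lambda_{1} + s$ is also divisible by $q^{n+1}$. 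Because $n$ grows without bound as $N \to \infty$ (via the unbounded $\psi$), I can arrange $n+1 \geq M$ by taking $N$ sufficiently large.

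To conclude, since $\lambda_{2} \in L(\Lambda)$ the value $v(\lambda_{2}) = -\log\lambda_{2}$ lies in $\tfrac{1}{e(L(\Lambda)|K_{\infty})}\mathds{Z}$, so $q^{M}$ divides $e(L(\Lambda)|K_{\infty})$, completing the argument. The only point requiring real care is the denominator arithmetic at the prime $q$ in the rearrangement of \eqref{Proposition.Eq.log-evaluation}, where the coprimality of $N$ to $p$ is essential; beyond that, everything follows routinely from the machinery already assembled in Sections~\ref{Section.Structure-of-the-field-extension} and \ref{Section.An-example}.
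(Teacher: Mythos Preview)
Your proposal is correct and follows essentially the same route as the paper: both take the $1$-sparse module with $\log g = 0$ and $-\log\Delta = N$ coprime to $p$, rearrange \eqref{Proposition.Eq.log-evaluation} to read off that $q^{n+1}$ divides the denominator of $s = \log(\lambda_{2}/\lambda_{1})$, and conclude via the value group that $q^{n+1}$ divides $e(L(\Lambda)|K_{\infty})$. Your version is simply more explicit about the choice $g=1$, $\Delta = T^{-N}$ and the Newton-polygon verification, which the paper leaves to the reader.
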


\section{The residue class degree}

In contrast with the ramification index, the residue class degree $f(L(\Lambda)|L)$ is bounded by a constant that depends only on the rank $r$ of the Drinfeld module.

\begin{Theorem} \label{Theorem.Residue-class-degree-bound}
	Let $\phi$ be a rank-$r$ Drinfeld $A$-module over $L$ with period lattice $\Lambda$, provided with a successive minimum basis $\mathbf{B} = \{ \lambda_{1}, \lambda_{2}, \dots, \lambda_{r} \}$. Split $\mathbf{B} = \mathbf{B}_{1} \cup \dots \cup \mathbf{B}_{t}$ as in \ref{subsection.Collection-and-ordering-of-basis-vectors}, where $\# \mathbf{B}_{\tau} = r_{\tau}$ and $\sum_{1 \leq \tau \leq t} r_{\tau} = r$. The residue class degree $f(L(\Lambda)|L)$ is bounded by 
	\begin{equation}
		f(L(\Lambda)|L) \leq \prod_{1 \leq \tau \leq t} (q^{r_{\tau}} - 1) \cdot p^{t-1}.
	\end{equation}
\end{Theorem}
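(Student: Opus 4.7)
The plan is to decompose $L(\Lambda)|L$ along the tower $L = L_0 \subset L_1 \subset \cdots \subset L_t = L(\Lambda)$ constructed in Section \ref{Section.Structure-of-the-field-extension}, then insert at each stage the intermediate field $\tilde{L}_\tau = L_{\tau-1}(V_\tau)$, and use multiplicativity of residue class degrees along towers:
\[
f(L(\Lambda)|L) \;=\; \prod_{\tau=1}^{t} f(L_\tau \mid \tilde{L}_\tau) \cdot f(\tilde{L}_\tau \mid L_{\tau-1}).
\]
I would bound each of the two factors separately.

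For the upper factor $f(L_\tau \mid \tilde{L}_\tau)$, Corollary \ref{Corollary.Ramification-index} gives $e(L_\tau \mid \tilde{L}_\tau) \in \{\# U_\tau,\, p^{-1}\# U_\tau\}$; combined with $[L_\tau : \tilde{L}_\tau] = \# U_\tau$, this forces $f(L_\tau \mid \tilde{L}_\tau) \in \{1, p\}$, in particular $\le p$. Moreover, for $\tau = 1$ the $P$- and $Q$-blocks in the block description of $U_\tau$ are absent (since $\overline{r}_0 = 0$), so $U_1$ is the subgroup of matrices whose $R$-block alone is trivial, namely $U_1 = \{1\}$. Hence $\tilde{L}_1 = L_1$ and $f(L_1 \mid \tilde{L}_1) = 1$: this is the observation that reduces the exponent of $p$ in the final bound from $t$ to $t-1$.

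For the lower factor $f(\tilde{L}_\tau \mid L_{\tau-1})$, Proposition \ref{Proposition.Isomorphies-of-special-Galois-groups} exhibits an auxiliary field $M_\tau$ with $L_{\tau-1} \subset M_\tau \subset \tilde{L}_\tau$ such that $M_\tau \mid L_{\tau-1}$ is completely ramified and $\tilde{L}_\tau \mid M_\tau$ is unramified of degree $f_\tau$, so $f(\tilde{L}_\tau \mid L_{\tau-1}) = f_\tau$. Here $f_\tau$ is the multiplicative order of a Frobenius element $\varphi$ regarded as an element of $\GL(r_\tau, \mathds{F})$. To bound $f_\tau$ I would use the following short counting argument: the subalgebra $\mathds{F}[\varphi] \subset M_{r_\tau}(\mathds{F})$ is commutative, and its $\mathds{F}$-dimension equals the degree of the minimal polynomial of $\varphi$, hence is at most $r_\tau$; therefore $\mathds{F}[\varphi]$ has at most $q^{r_\tau}$ elements, so the set of nonzero elements has size at most $q^{r_\tau} - 1$. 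Since the powers $\varphi, \varphi^2, \dots, \varphi^{f_\tau} = 1$ are distinct nonzero elements of $\mathds{F}[\varphi]$, we conclude $f_\tau \le q^{r_\tau} - 1$.

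Assembling the two bounds along the tower gives
\[
f(L(\Lambda) \mid L) \;\le\; 1 \cdot (q^{r_1} - 1) \cdot \prod_{\tau=2}^{t} p \, (q^{r_\tau} - 1) \;=\; p^{\,t-1} \prod_{\tau=1}^{t} (q^{r_\tau} - 1),
\]
which is the claimed inequality. The main delicate points are the correct bookkeeping in the $\tau = 1$ base case (responsible for $p^{t-1}$ rather than $p^t$) and the subalgebra-counting bound $f_\tau \le q^{r_\tau} - 1$; the remainder is a direct combination of Corollary \ref{Corollary.Ramification-index} and Proposition \ref{Proposition.Isomorphies-of-special-Galois-groups}.
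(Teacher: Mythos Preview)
Your proposal is correct and follows essentially the same approach as the paper: the same tower $L_{\tau-1} \subset M_\tau \subset \tilde{L}_\tau \subset L_\tau$, the same use of Proposition~\ref{Proposition.Isomorphies-of-special-Galois-groups} and Corollary~\ref{Corollary.Ramification-index}, and the same handling of the $\tau=1$ base case (the paper phrases it as $\tilde{\Lambda}_1 = \Lambda_1$, you as $U_1 = \{1\}$, which are equivalent). Your subalgebra-counting argument for $\mathrm{ord}(\varphi) \le q^{r_\tau}-1$ simply spells out what the paper calls an ``elementary fact.''
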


\begin{proof}
	Consider the tower of subfields described in Section \ref{Section.Structure-of-the-field-extension}:
	\[
		L = L_{0} \subset L_{1} = L(\Lambda_{1}) \subset \dots \subset L_{\tau} = L(\Lambda_{\tau}) \subset \dots \subset L_{t} = L.
	\]
	Each step $L_{\tau}|L_{\tau-1}$ is subdivided 
	\[
		L_{\tau-1} \subset M_{\tau} \subset \tilde{L}_{\tau} \subset L_{\tau} \qquad (\tau = 1,2, \dots, t).
	\]
	Now:
	\begin{itemize}
		\item $M_{\tau}|L_{\tau-1}$ is completely ramified (Proposition \ref{Proposition.Isomorphies-of-special-Galois-groups});
		\item $\tilde{L}_{\tau}|M_{\tau}$ is unramified of degree $f_{\tau}$, where $f_{\tau} = \mathrm{ord}(x)$ with some $x \in \GL(r_{\tau}, \mathds{F})$ (Proposition \ref{Proposition.Isomorphies-of-special-Galois-groups});
		\item $L_{\tau}|\tilde{L}_{\tau}$ has residue class degree 1 or $p$ (Corollary \ref{Corollary.Ramification-index}) and $\tilde{L}_{1} = L_{1}$, since $\tilde{\Lambda}_{1} = \Lambda_{1}$.
	\end{itemize}
	The result now follows by the multiplicativity of the residue class degree in towers and from the elementary fact that
	\begin{equation}
		\mathrm{ord}(x) \leq q^{n} - 1 \qquad \text{for } x \in \GL(n, \mathds{F}). \qedhere
	\end{equation}
\end{proof}

\begin{Remarks}
	\begin{enumerate}[label=(\roman*), wide]
		\item As $\sum r_{\tau} = r$, $\prod (q^{r_{\tau}} - 1) \leq q^{r} - 1$, so $f(L(\Lambda)|L)$ is always less or equal to $(q^{r} - 1)p^{t-1}$.
		\item As $L(\Lambda) = L(\tor(\phi))$, the constant field extension in the torsion field $L(\tor(\phi))$, i.e., the algebraic closure $\overline{\mathds{F}}$ of $\mathds{F}$ in $L(\tor(\phi))$, is finite. Its degree
		$[\overline{\mathds{F}} : \mathds{F}]$ is bounded by $f(L|K_{\infty})(q^{r} - 1)p^{t-1}$. In particular, if $\phi$ is defined over a finite extension $K'$ of $K$, and $\overline{\mathds{F}} \defeq$ algebraic closure of $\mathds{F}$
		in $K'(\tor(\phi))$, then 
		\begin{equation}
			[\overline{\mathds{F}} : \mathds{F}] \leq \min_{w \mid \infty} f(K_{w}' : K_{\infty}))(q^{r} - 1) p^{r-1},
		\end{equation}
		where $w$ runs through the places of $K'$ above the place $\infty$ of $K$.
	\end{enumerate}
\end{Remarks}

\clearpage

\begin{bibdiv}
	\begin{biblist}
		\bib{Gekeler2017}{article}{author={Gekeler, Ernst-Ulrich}, title={Towers of $\GL(r)$-type of modular curves}, journal={Journal für die reine und angewandte Mathematik, Ahead of print}, DOI={10.1515/crelle-2017-0012}}
		\bib{Gekeler2018}{article}{author={Gekeler, Ernst-Ulrich}, title={On Drinfeld modular forms of higher rank II}, journal={Journal of Number Theory, to appear}, DOI={10.1016/j.jnt.2018.11.011}}
		\bib{Goss96}{book}{author={Goss, David}, title={Basic structures of function field arithmetic}, series={Ergebnisse der Mathematik und ihrer Grenzgebiete (3) [Results in Mathematics and Related Areas (3)]}, volume={35}, publisher={Springer-Verlag, Berlin}, date={1996}}
		\bib{Maurischatxx}{article}{author={Maurischat, Andreas}, title={On field extensions given by periods of Drinfeld modules}, journal={Archiv der Mathematik, to appear.}}
		\bib{Serre68}{book}{author={Serre, Jean-Pierre}, title={Corps locaux}, publisher={Hermann, Paris}, date={1968}}
	\end{biblist}
\end{bibdiv}

\end{document}